\documentclass[12pt,reqno]{amsart}
\usepackage{graphicx}
\usepackage{amsmath}
\usepackage{amssymb}
\usepackage{amscd}
\usepackage{hhline}
\usepackage{mathrsfs}
 \usepackage{color}

\newtheorem{theorem}{Theorem}

\newtheorem{theoremc}{Theorem}

\newtheorem{rk}[theoremc]{Remark}

\newtheorem{prop}[theorem]{Proposition}
\newcommand\bib[1]{\bibitem[#1]{#1}}

\newcommand\com[1]{}
\newcommand\C{{\mathbb C}}
\newcommand\Cc{{\let\mathcal\mathscr\mathcal C}}

\renewcommand\d{\delta}

\renewcommand\l{\lambda}

\newcommand\oo{\omega}
\newcommand\op[1]{\mathop{\rm #1}\nolimits}
\newcommand\ot{\otimes}
\newcommand\p{\partial}

\newcommand\R{{\mathbb R}}

\newcommand\vp{\varphi}
\newcommand\we{\wedge}

\begin{document}

 \title[Submaximal metric projective and affine structures]{Submaximal metric projective \\ and metric affine structures}
 \author{Boris Kruglikov \& Vladimir Matveev}
 \date{}
 \dedicatory{Dedicated to Mike Eastwood on the occasion of his 60${\,}^{th}$ birthday}
 \address{BK: \ Institute of Mathematics and Statistics, University of Troms\o, Troms\o\ 90-37, Norway.
\quad E-mail: {\tt boris.kruglikov@uit.no}. \newline
 \hphantom{W} VM: \ Mathematisches Institut, Friedrich-Schiller-Universit\"at, 07737, Jena, Germany.
\quad Email: {\tt vladimir.matveev@uni-jena.de}}

 \vspace{-14.5pt}
 \begin{abstract}
We prove that the next possible dimension after the maximal $n^2+2n$ for the
Lie algebra of local projective symmetries of a metric on a manifold of dimension $n>1$
is $n^2-3n+5$ if the signature is Riemannian or $n=2$,
$n^2-3n+6$ if the signature is Lorentzian and $n>2$, and $n^2-3n+8$ elsewise.
We also prove that the Lie algebra of local affine symmetries
of a metric has the same submaximal dimensions (after the maximal $n^2+n$)
unless the signature is Riemannian and $n=3,4$, in which case
the submaximal dimension is $n^2-3n+6$.
 \end{abstract}

 \maketitle

\section*{Introduction}

Consider a linear torsion-free connection $\Gamma=(\Gamma^i_{jk})$ on a smooth connected manifold $M^n$ of dimension
$n\geq2$. A vector field $v$ is called a {\it projective symmetry}, or a {\it projective vector field}, if its local flow sends geodesics (considered as unparameterized curves) to geodesics. Since S.Lie \cite{L} it is known that projective vector fields form a Lie algebra, which we denote by $\mathfrak{p}(\Gamma)$. A vector field $v$ is called an {\it affine symmetry}, or an {\it affine vector field}, if its local flow preserves $\Gamma$; affine vector fields also form a Lie algebra $\mathfrak{a}(\Gamma)$, which we call {\it affine algebra}.
Obviously $\mathfrak{a}(\Gamma)\subseteq\mathfrak{p}(\Gamma)$ is a subalgebra.

It follows from E.Cartan \cite{C} that  $\dim(\mathfrak{p}(\Gamma))\le n^2+2n$ and a connection with the maximal dimension of the projective algebra is {\it projectively flat}, i.e. in a certain local coordinate system the geodesics are straight lines. I.Egorov \cite{E$_1$} proved that the next possible dimension of $\mathfrak{p}(\Gamma)$, the so-called {\it submaximal\/} dimension (this is the maximal dimension
among all non-flat structures), is $n^2-2n+5$ for $n>2$. For $n=2$,  it
was known since S. Lie \cite{L} and A.Tresse \cite{T} that the submaximal dimension is 3.

However, for $n>2$,  the projective structures realizing this dimension are non-metric, in the
sense there exists no (local) metric such that its Levi-Civita connection has
$\dim(\mathfrak{p}) = n^2-2n+5$. This observation follows for example from \cite[(3.5)]{EM},
which can be viewed as  a system of linear equations on the components of the metric $g$,
whose coefficients come from the components of the projective Weyl tensor $W$.
By Egorov \cite{E$_1$}, in a certain coordinate system the connection with the submaximal dimension
of the projective algebra has only two non-zero term $\Gamma_{23}^1=\Gamma_{32}^1=x_2$.
Direct calculation shows that the only nonvanishing components of the  projective Weyl tensor
are  $W_{232}^1=1=-W_{322}^1$; substitution of this into \cite[(3.5)]{EM} yields a system of linear equations such that any solution $g$ is a degenerate symmetric tensor.

Non-metrizability of the Egorov's submaximal projective structure was obtained
independently (and by another method) by S.Casey and M.Dunajski.
In fact, it follows instantly from our first main result
(below $\delta^2_n$ is the Kronecker symbol, i.e. 1 for $n=2$ and 0 else):

 \begin{theorem}\label{Thm1}
Let $\Gamma$ be the Levi-Civita connection of a metric $g$ on $M^n$.
Assume that $\Gamma$ is not projectively flat at least at one point
(i.e. $g$ is not a metric of constant sectional curvature).
Then the maximal possible dimension of the symmetry algebra
$\mathfrak{p}(g)=\mathfrak{p}(\Gamma)$ is equal to
 \begin{itemize}
\item $n^2-3n+5$, \ if $g$ has Riemannian signature,%
\footnote{Within this paper we consider the metrics up to multiplication by $\pm 1$ (since multiplication by a nonzero constant does not change the projective and affine algebras).
In particular both signatures $(+,-,\dots,-)$ and $(-,+,\dots,+)$ are Lorentzian for us,
and we view positively and negatively definite metrics as Riemannian.}
\item $n^2-3n+6-\delta^2_n$, \ if $g$ has Lorentzian signature,
\item $n^2-3n+8$, \ if $g$ has the general signature.
 \end{itemize}
 \end{theorem}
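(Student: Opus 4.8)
The plan is to convert the statement into one about the finite-dimensional solution space of the metrizability equation \cite{EM}. Let $\mathcal S$ denote the space of weighted symmetric $2$-tensors solving this projectively invariant, finite-type equation; its nonzero solutions are the metrics projectively equivalent to $g$, among them $g$ itself via a fixed $\sigma_0=\sigma_g$, and $\dim\mathcal S=D(g)$ satisfies $1\le D(g)\le\binom{n+2}{2}$, with equality on the right exactly when $\Gamma$ is projectively flat --- the case excluded in the theorem. Since the equation is projectively invariant, every $v\in\mathfrak p(\Gamma)$ acts on $\mathcal S$ by the Lie derivative, and the subalgebra $\mathfrak h(g)\subseteq\mathfrak p(g)$ of homotheties of $g$ is exactly $\{v:\mathcal L_v\sigma_0\in\mathbb R\sigma_0\}$; comparing the $\mathfrak p(g)$-orbit of $\sigma_0$ with all of $\mathcal S$ yields the basic estimate
\[
\dim\mathfrak p(g)\ \le\ \dim\mathfrak h(g)+D(g)-1 .
\]
This is an equality for constant-curvature metrics, where it returns $\binom{n+1}{2}+\binom{n+2}{2}-1=n^2+2n$, so everything comes down to improving it when $g$ is not of constant curvature. (Equivalently, one may work with the prolongation/tractor connection of the metrizability equation on $\mathcal V=S^2\mathcal T$, so that $D(g)$ and the finer structure of $\mathcal S$ are read off from the holonomy; reducibility of the holonomy representation mirrors the warped-product splittings used below, and it is here that the hypothesis ``$\Gamma$ is metric'' cuts down the admissible projective Weyl curvature --- the same restriction, coming from \cite[(3.5)]{EM}, that rules out Egorov's submaximal projective model.)

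The core is a case analysis on $D(g)$. If $D(g)=1$, then $\mathcal L_v\sigma_0$ must lie in the one-dimensional space $\mathcal S$, forcing $v$ to be a homothety of $g$, so $\mathfrak p(g)=\mathfrak h(g)$; the classical submaximal bounds for isometry (hence homothety) algebras of non-constant-curvature metrics --- Fubini, Egorov, Wang, and their pseudo-Riemannian analogues --- then bound $\dim\mathfrak p(g)$ well below the asserted values, so $D(g)=1$ is never extremal. If $D(g)=2$, Levi-Civita's normal form presents $g$, locally, as an explicit warped product over lower-dimensional blocks with a single ODE-governed ``projective'' direction; if $D(g)\ge3$, Solodovnikov's theorem and its refinements (Sinjukov; Mike\v s; Matveev and collaborators) either force $g$ to be of constant curvature --- excluded --- or place it in a further short explicit list of warped-product metrics. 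In each of these remaining cases I would bound $\dim\mathfrak h(g)$ and the $\mathfrak p(g)$-action on $\mathcal S$ block by block, again invoking the submaximal isometry bounds above for the individual blocks and fibre metrics, and then optimise $\dim\mathfrak h(g)+D(g)-1$ over all the normal forms and all admissible signatures. This optimisation produces the three values $n^2-3n+5$, $n^2-3n+6-\delta^2_n$, $n^2-3n+8$: the signature enters through which degenerate invariant subspaces --- a null line in the Lorentzian case, a totally null plane in the remaining signatures --- the blocks and fibre metrics are allowed to carry, and the term $\delta^2_n$ records that for $n=2$ the relevant block is too small to support the extra Lorentzian symmetry.

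Sharpness is then obtained by exhibiting, in each signature, an explicit non-constant-curvature metric attaining the claimed dimension: a Levi-Civita type warped product over a constant-curvature base of the appropriate dimension in the Riemannian and Lorentzian cases, and a variant built on a Walker (parallel null) block in the other signatures, for which one computes $\mathfrak p(g)$ directly.

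I expect the main obstacle to be the optimisation in the $D(g)\in\{2,3\}$ part: running over all Levi-Civita and Solodovnikov normal forms and all signatures of the constituent blocks, and proving that no such configuration beats the listed ones. That is where the $+1$ and $+3$ signature shifts and the $\delta^2_n$ correction are actually pinned down, and it is delicate because it couples the structure theory of projectively equivalent metrics with the signature-sensitive submaximal-symmetry estimates for the separate pieces. A secondary difficulty is keeping the reductions airtight --- controlling the kernel of the $\mathfrak p(g)$-action on $\mathcal S$ and the homothety algebra $\mathfrak h(g)$, and correctly transferring the non-constant-curvature hypothesis to the relevant block at each step.
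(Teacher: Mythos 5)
Your opening inequality $\dim\mathfrak p(g)\le\dim\mathfrak h(g)+D(g)-1$ is exactly the paper's estimate (\ref{est2}), so the starting point is right. The gap is in everything after it. The paper never classifies the metrics by their degree of mobility; instead it combines two independent \emph{quantitative} submaximal bounds -- Mike\v s's bound $D(g)\le\binom{n-1}{2}+1$ for metrics of non-constant curvature (\ref{MiK}), the Kiosak--Matveev bound $D(g)\le 2$ when $g$ is conformally flat, and the signature-dependent submaximal bounds on $\dim I(g)$, $\dim H(g)$, $\dim C(g)$ (Kobayashi--Nagano in the Riemannian case; $\dim C(g)\le\binom{n-1}{2}+4$ resp.\ $\binom{n-1}{2}+6$ for conformally non-flat Lorentzian resp.\ general signature, from Doubrov--The and Kruglikov--The). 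Adding these is what produces $n^2-3n+5$, $+6$, $+8$. None of these inputs appears in your proposal; the ``optimisation over normal forms'' to which you defer them is precisely where the entire content of the theorem sits, and you give no mechanism by which the specific constants would emerge.

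Moreover, the classification route you propose for that optimisation does not go through in the stated generality. Solodovnikov's theorem for $D(g)\ge3$ and Levi-Civita's warped-product normal forms for $D(g)=2$ are Riemannian results: in Lorentzian and general signature the tensor $a\in\op{Sol}(\ref{sin})$ may have complex eigenvalues or nontrivial Jordan blocks, the extra solutions may be degenerate (hence not metrics at all, so ``the nonzero solutions are the projectively equivalent metrics'' is already false), and the $D\ge3$ structure theory involves cone constructions rather than ``a further short explicit list of warped-product metrics.'' Since the extremal models in Lorentzian and general signature are pp-waves -- Walker-type metrics with parallel null distributions, far from diagonalizable warped products -- the very cases that realize the bounds $n^2-3n+6$ and $n^2-3n+8$ fall outside the normal forms your case analysis relies on. A secondary inaccuracy: (\ref{est2}) is not an equality for flat space, where $\dim H(g)=\binom{n+1}{2}+1$ and the right-hand side is $n^2+2n+1$.
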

The bound for the general signature was obtained by Mikes \cite{Mi$_1$}.
Our approach however differs from his.

Notice that in the global setting, i.e. if we replace the projective algebra by a projective group,
the sub-maximal projective connection is metric.
The reason is that there are locally-projectively-flat manifolds whose projective group actually has
dimension $n^2+n$ \cite{Y} (this is the global submaximal bound).

If $M$ is closed and the metric $g$ is Riemannian of non-constant sectional curvature, then
the sub-maximal bound is ${n\choose2}+1$ for all $n\ne 4$; for $n=4$ this dimension is ${n\choose2}+2=8$.
Indeed by \cite{M$_1$,M$_2$}, on closed Riemannian manifolds of nonconstant sectional curvature,
the projective group acts by isometries, so the claim follows from \cite{Yan,E$_2$,KN}.
The corresponding models are precisely $\mathbb{S}^1\times\mathbb{S}^{n-1}$ for $n\ne 4$,
possibly quotient by a finite group, and $\mathbb{C}P^2$ for $n=4$.

The problem of determining the dimension gaps (lacunes in the terminology of the Russian geometry school)
between the maximal and sub-maximal structures is classical, see the discussion in \cite{K$_2$}.

\smallskip

Let us now discuss an analogous question for the affine algebra.
The maximal dimension of the space of affine symmetries of an
affine connection is classically known to be $n^2+n$. The submaximal
dimension is $n^2$ and this was also found by I.Egorov \cite{E$_3$}.
The corresponding connections are projectively flat, and for projectively
non-flat connections the affine algebra has maximal dimension $n^2-2n+5$, $n>2$
\cite{E$_1$} (it equals 3 for $n=2$). Again, all these submaximal connections are non-metric.

Our second main result concerns
submaximal dimensions of the affine algebras $\mathfrak{a}(\Gamma)$ of Levi-Civita connections $\Gamma$.

 \begin{theorem}\label{Thm2}
Non-flat metrics $g$ on a manifold $M^n$ have maximal dimension of the affine algebra
$\mathfrak{a}(g)=\mathfrak{a}(\Gamma)$ equal to
 \begin{itemize}
\item $n^2-3n+5+\d_n^3+\d_n^4$, \ if $g$ has Riemannian signature,
\item $n^2-3n+6-\d_n^2$, \ if $g$ has Lorentzian signature,
\item $n^2-3n+8$, \ if $g$ has the general signature.
 \end{itemize}
 \end{theorem}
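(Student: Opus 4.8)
The plan is to reformulate the asserted value as $\max\{P(n),H(n)\}$, where $P(n)$ is the submaximal dimension of the projective algebra of a metric in the given signature supplied by Theorem~\ref{Thm1}, and $H(n)=\binom{n+1}{2}$ is the dimension of the Killing algebra of a nonflat space form of that signature. A short arithmetic check confirms this: for the Lorentzian and general signatures $P(n)\ge H(n)$ for all admissible $n$, so $\max\{P(n),H(n)\}=P(n)$; for the Riemannian signature $P(n)=n^2-3n+5\ge\binom{n+1}{2}$ exactly when $n\ne 3,4$, whereas $\binom{n+1}{2}=P(n)+1=n^2-3n+6$ for $n=3,4$, which is precisely the correction $\delta_n^3+\delta_n^4$. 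It therefore remains to prove (i) $\dim\mathfrak{a}(g)\le\max\{P(n),H(n)\}$ for every nonflat metric $g$, and (ii) that both $P(n)$ and $H(n)$ are realized as $\dim\mathfrak{a}(g)$ for appropriate nonflat $g$ whenever they equal this maximum.

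For the upper bound (i), I would split on projective flatness of the Levi-Civita connection $\Gamma$ of $g$. If $\Gamma$ is not projectively flat at some point, restrict to a neighborhood $U$ where this persists: Theorem~\ref{Thm1} gives $\dim\mathfrak{p}(g|_U)\le P(n)$, and since $\mathfrak{a}(\Gamma)\subseteq\mathfrak{p}(\Gamma)$ while the restriction $\mathfrak{a}(g)\to\mathfrak{a}(g|_U)$ is injective (an affine field is determined by its $1$-jet at a point), we obtain $\dim\mathfrak{a}(g)\le P(n)$. If instead $\Gamma$ is projectively flat everywhere, Beltrami's theorem forces $g$ to have constant sectional curvature, necessarily nonzero since $g$ is nonflat, and then $\mathfrak{a}(g)$ coincides with the Killing algebra $\mathfrak{i}(g)$: for an affine field $v$ the flow preserves $\Gamma$, so $\mathcal{L}_v$ commutes with $\nabla$ and $\nabla(\mathcal{L}_v g)=\mathcal{L}_v(\nabla g)=0$; as the holonomy of a nonzero-constant-curvature metric is the full $\mathrm{SO}(p,q)$, for which the invariant symmetric bilinear forms on the tangent space are just the multiples of $g$, we get $\mathcal{L}_v g=\lambda g$ with $\lambda$ constant; and applying $\mathcal{L}_v$ to the Einstein equation $\mathrm{Ric}=(n-1)c\,g$ with $c\ne 0$ forces $\lambda=0$, so $v\in\mathfrak{i}(g)$ and $\dim\mathfrak{a}(g)=H(n)$. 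In either case $\dim\mathfrak{a}(g)\le\max\{P(n),H(n)\}$.

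For the realization (ii): whenever $\max\{P(n),H(n)\}=H(n)$ — for the Riemannian signature in dimensions $3,4$, and in the low-dimensional ties in all signatures — the nonflat space form itself attains the bound, by the computation just made. When $P(n)>H(n)$, I must produce a nonflat, necessarily non-projectively-flat metric $g$ with $\dim\mathfrak{a}(g)=P(n)$; by the upper bound this is equivalent to every projective symmetry of $g$ being affine, i.e.\ $\mathfrak{p}(g)=\mathfrak{a}(g)$. The lemma I would invoke is that this holds as soon as $g$ has degree of mobility one (i.e.\ every metric geodesically equivalent to $g$ is a constant multiple of $g$): the flow of a projective vector field then consists of homotheties of $g$, and homotheties preserve the Levi-Civita connection. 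So it remains to write down, for the outstanding signatures and dimensions (Lorentzian and general signature for all $n$, Riemannian for $n\ge6$), explicit non-projectively-flat metrics of mobility one whose projective algebra attains $P(n)$ — these can be taken among the submaximal metrics underlying Theorem~\ref{Thm1} — and to read off from their explicit form both the mobility and the equality $\dim\mathfrak{a}(g)=\dim\mathfrak{p}(g)=P(n)$.

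The main obstacle is this last step. Given Theorem~\ref{Thm1}, the upper bound is nearly formal, requiring only two standard inputs: Beltrami's theorem, and the fact that a parallel symmetric $2$-tensor on a constant-curvature space is a multiple of the metric — which in the low cases $\mathrm{SO}(2)$ and $\mathrm{SO}(1,1)$ must be checked by hand (the invariant symmetric forms stay one-dimensional). The real work is the realization when $P(n)>H(n)$: exhibiting suitable non-projectively-flat metrics in the Lorentzian and general-signature cases, computing their projective and affine algebras, and confirming these coincide — equivalently, that the extremal metrics are affinely rigid in the sense of mobility one. This rigidity is exactly what fails for the projective-submaximal Riemannian metrics in dimensions $3$ and $4$, where $\mathfrak{p}(g)$ is strictly larger than $\mathfrak{a}(g)$; there the nonflat space form, whose affine algebra has dimension $\binom{n+1}{2}=n^2-3n+6$, takes over as the extremal example.
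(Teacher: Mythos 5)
Your decomposition $\max\{P(n),H(n)\}$, the use of Beltrami's theorem to split on projective flatness, and the identification $\mathfrak{a}(g)=I(g)$ for nonzero constant curvature are exactly the paper's argument for the upper bound and for the $\delta_n^3+\delta_n^4$ correction, and your holonomy computation there is a correct (and slightly more detailed) version of what the paper asserts. The genuine gap is in the realization step when $P(n)>H(n)$. Your proposed lemma --- that $\mathfrak{p}(g)=\mathfrak{a}(g)$ follows from, and is ``equivalent to,'' degree of mobility one --- is a valid sufficient condition but is inapplicable to the extremal metrics, and the claimed equivalence is false. The submaximal models are products such as $S^2_c\times\R^{n-2}$ and the pp-waves $M^3_{pp}\times\R^{n-3}$, $M^4_{pp}\times\R^{n-4}$; these carry an $(n-2)$- or $(n-3)$-dimensional space of parallel vector fields, whose symmetric products give parallel $(0,2)$-tensors and hence $D(g)=\binom{n-1}{2}+1$, the \emph{submaximal} (large) degree of mobility --- indeed this is exactly the value of $D(g)$ that enters the sharpness of the estimate $\dim\mathfrak{p}\le\dim H+D-1$ for these models. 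So mobility one can never certify the examples you need; any metric with a pure affine (non-homothetic) symmetry automatically has $D(g)\ge2$, since $L_vg$ is then a parallel tensor independent of $g$.

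What is actually needed, and what the paper does, is a direct verification that the submaximal projective models of Theorem \ref{Thm1} have $\mathfrak{p}(g)=\mathfrak{a}(g)$ for $n>2$: in Sections \ref{S3} and \ref{S4} the full lists of Killing, homothetic and pure affine fields are exhibited and their count already exhausts $n^2-3n+\sigma$, so there is no room for essential projective fields; in the Lorentzian case this is also proved abstractly via the estimate $\dim\mathfrak{p}(g)\le\dim I(g)+D(g)-1$ of \cite[Theorem 3]{FM}, valid in the presence of an essential projective field, which would force $\dim\mathfrak{p}(g)\le n^2-3n+5$ and contradict submaximality. You would need to replace your mobility-one criterion by one of these arguments. (A minor additional caveat: for $n=2$ the projectively submaximal metrics have $\dim\mathfrak{a}(g)=2<3$, so in the tie $P(2)=H(2)=3$ only the space form realizes the bound; your phrasing that ``both $P(n)$ and $H(n)$ are realized'' whenever they equal the maximum is too strong there, though the conclusion is unaffected.)
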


In the process of the proof of Theorems \ref{Thm1} and \ref{Thm2} we essentially describe
all metrics for the submaximal dimension of the projective
and affine algebras of a metric connection.

Throughout the paper, all our considerations are local (so isomorphisms to the models
are understood locally).

Let us finally specify the gap $\Delta^\mathfrak{p}_1$ between the maximal dimension
of the projective algebra and the submaximal one, and the gap $\Delta^\mathfrak{p}_2$ between the
submaximal projective and submaximal metric projective dimensions
(of un-restricted signature):

 \begin{center}
\begin{tabular}{c||c|c|c|c|c|c|c|c|c}
$n$ & 2 & 3 & 4 & 5 & 6 & 7 & 8 & 9 & \dots\\
\hline
$\Delta^\mathfrak{p}_1$ & 5 & 7 & 11 & 15 & 19 & 23 & 27 & 31 & \dots \\
\hline
$\Delta^\mathfrak{p}_2$ & 0 & 2 & 1 & 2 & 3 & 4 & 5 & 6 & \dots
\end{tabular}
 \end{center}

For affine algebras the corresponding gaps are the following
(with the obvious modification to define $\Delta^\mathfrak{a}_i$)

 \begin{center}
\begin{tabular}{c||c|c|c|c|c|c|c|c|c}
$n$ & 2 & 3 & 4 & 5 & 6 & 7 & 8 & 9 & \dots\\
\hline
$\Delta^\mathfrak{a}_1$ & 2 & 3 & 4 & 5 & 6 & 7 & 8 & 9 & \dots \\
\hline
$\Delta^\mathfrak{a}_2$ & 1 & 3 & 4 & 7 & 10 & 13 & 16 & 19 & \dots
\end{tabular}
 \end{center}

\section{Degree of mobility and useful estimates.}\label{S1}

Two metrics $g$ and $\bar g$ on a manifold $M$ are {\it projectively equivalent},
if any $g$-geodesic is a reparameterization of a $\bar g$-geodesic.
This can be expressed \cite{S$_2$} through the $(1,1)$-tensor
$a={\bar g}^{-1}g\cdot\left|\frac{\det(\bar g)}{\det(g)}\right|^{1/(n+1)}$,
where ${\bar g}^{-1}$ is the inverse of ${\bar g}$ ($\bar g^{ik}\bar g_{kj}=\delta^i_j$),
and ${\bar g}^{-1}g$ is the contraction (${\bar g}^{ik}g_{kj}$):
the metrics $g$ and $\bar g$ are geodesically equivalent if and only if
 \begin{equation}\label{sin}
(n+1)a^i_{j,k}=a^{is}_{\,,s}\,g_{jk}+a^s_{j,s}\d^i_k.
 \end{equation}
In the coordinate-free notation the above formula reads
 $$
(n+1)\nabla a=\op{div}(g^{-1}a)\ot g+\op{div}(a)\ot\op{Id}. 
 $$

Dimension $D(g)$ of the solution space $\op{Sol}(\ref{sin})$ of this linear PDE system
on unknown $a$ is called the {\it degree of mobility\/} of $g$.

Let us denote by  $I(g)$ the algebra of infinitesimal isometries (Killing vector fields),
by $H(g)$ the algebra of homotheties, and by $C(g)$ the algebra of the conformal
vector fields. These are given by the equation $L_v g =\l\cdot g$ on
the vector field $v$, where $\l$ is respectively zero, a constant or an arbitrary function.
Clearly $I(g)\subseteq H(g)\subseteq C(g)$.

In this paper we shall actively use the following two estimates due to
\cite[Theorem 1 and Theorem 2]{Mi$_1$}:
 \begin{eqnarray}
\dim \mathfrak{p}(g)& \le & \dim I(g)+D(g) \label{est1} \\
\dim \mathfrak{p}(g)& \le& \dim H(g)+D(g)-1. \label{est2}
 \end{eqnarray}

Let us prove these estimates (our proof is different from that of Mikes and is much simpler).
It is well known (see \cite{M$_2$} and references therein) that $v\in \mathfrak{p}(g)$ iff
 \begin{equation}\label{Lvg}
a=g^{-1}L_vg-\tfrac1{n+1}\op{Trace}(g^{-1}L_vg)\cdot\op{Id}
 \end{equation}
is a solution of (\ref{sin}). Denote by $\phi:\mathfrak{p}(g)\to \op{Sol}(\ref{sin})$
the linear map sending $v$ to the right hand side of (\ref{Lvg}). Since
$\op{Ker}(\phi)=I(g)$, the rank theorem yields inequality \eqref{est1}.

In order to obtain the second estimate, we observe that
(\ref{sin}) has the obvious one-dimensional subspace of constant solutions
$\R\cdot\op{Id}$. Let $\pi:\op{Sol}(\ref{sin})\to\op{Sol}(\ref{sin})/(\R\cdot\op{Id})$
be the projection. Then the linear map
$\pi\circ\phi:\mathfrak{p}(g)\to\op{Sol}(\ref{sin})/(\R\cdot\op{Id})$
has kernel $H(g)$ and \eqref{est2} follows.

Below we will use the following results on the degree of mobility.
It is always is bounded so:
 $$
D(g)\le {n+2\choose2},
 $$
the equality corresponds to the space of constant curvature \cite{S$_2$}.
The next biggest (submaximal) dimension in any signature is \cite{Mi$_2$}:
 \begin{equation}\label{MiK}
D(g)_{\text{sub.max}}={n-1\choose2}+1.
 \end{equation}
Finally, let us recall from \cite[Lemma 6]{KM} that if the Weyl conformal curvature tensor
of the metric $g$ vanishes, but $g$ is not of constant sectional curvature, then $D(g)\le 2$.

\section{Riemannian case, dimension $n>3$}\label{S2}

In this section we assume that the metric is Riemannian, $n>3$.
Denote by $S_c^n$ the space form of constant curvature $c$, i.e.
the sphere $\mathbb{S}^n\subset\R^{n+1}$ of radius $c>0$, the Euclidean space
$\mathbb{R}^n$ for $c=0$, or the hyperbolic space $\mathbb{H}^n_c$ for $c<0$
equipped with the standard metrics.

Riemannian manifolds with abundant isometries were studied (among others) by
H.C.Wang \cite{W}, K.Yano \cite{Yan}, I.Egorov \cite{E$_2$}, S.Kobayashi and T.Naga\-no \cite{KN}.
According to the local versions of their results, the Riemannian metrics $g$
of non-constant sectional curvature on $M^n$ with $\dim I(g)\ge{n-1\choose 2}+3$, $n>2$,
are contained in the following list (see the Appendix for details):

\smallskip

 \begin{enumerate}
 \item[(a)]
$\dim I(g)={n\choose2}+2=8$, $n=4$. The corresponding $g$ is a K\"ahler metric on a
complex surface with constant nonzero holomorphic sectional curvature (e.g. Fubini-Study
metric on $\C P^2$).
 \item[(b)]
$\dim I(g)={n\choose2}+1$. The corresponding $g$ is the standard metric on the product
$M^n=\R^1\times S^{n-1}_c$ ($c\ne0$).
 \item[(c$_1$)]
$\dim I(g)={n\choose2}$. The corresponding $g=dt^2+a(t)^2ds^2_0$ is the warped product metric,
where $ds_0^2$ is the standard metric on $S^{n-1}_c$ and $a(t)$ is a generic function
(such that $g$ is not of constant sectional curvature and not as in (b)).
 \item[(c$_2$)]
$\dim I(g)={n\choose2}$, $n=6$. $M$ is a K\"ahler manifold of complex dimension 3
of constant nonzero holomorphic sectional curvature.
 \item[(d$_1$)]
$\dim I(g)={n-1\choose2}+3$. The corresponding $M^n=S^2_c\times S^{n-2}_{\bar c}$, and
the constants $c,\bar c$ are not simultaneously zero.
 \item[(d$_2$)]
$\dim I(g)={n-1\choose2}+3$, $n=8$. The corresponding $M$ is a K\"ahler manifold of complex dimension 4
with constant nonzero holomorphic sectional curvature.
 \end{enumerate}

In all these cases, except possibly (c$_1$), $H(g)=I(g)$. Thus
the submaximal dimension of the homothety algebra is $\dim H(g)={n\choose2}+1$
for $n\neq4$. Consequently in the cases (b) and (c$_1$), where the metric $g$ is
conformally flat and so $D(g)\le2$, we obtain from (\ref{est2}):
 $$
\dim\mathfrak{p}(g)\le {n\choose2}+1+1<n^2-3n+5.
 $$

Consider the spaces (a,c$_2$,d$_2$) of nonzero constant holomorphic sectional curvature.
Then $D(g)=1$ (since it is always so for the irreducible symmetric spaces \cite{S$_1$})
and we conclude similarly that $\dim\mathfrak{p}(g)$ is strictly less than the bound
from Theorem \ref{Thm1}.

For the case (d$_1$) we have, combining (\ref{est2}) and (\ref{MiK}):
 $$
\dim \mathfrak{p}(g)\le {n-1\choose2}+3+{n-1\choose2}=n^2-3n+5.
 $$

Finally, if $\dim I(g)<{n-1\choose2}+3$, then $\dim H(g)<{n-1\choose2}+3$.
Indeed, if $I(g)$ acts transitively, then $H(g)=I(g)$ unless the metric is flat
everywhere\footnote{Indeed, if $\vp^*g=\l\cdot g$ for a map $\vp:M\to M$, then
$\vp^*\|R_g\|^2=\l^{-2}\|R_g\|^2$, where $R_g$ is the Riemann curvature tensor
and $\|\!\cdot\!\|$ is the $g$-norm. Thus either $\l=1$ or $R_g(x)=0$ for any fixed point $x$
of $\vp$. Given transitivity of $I(g)$, for any $x$ there is a $\l$-homothety $\vp$ with $\vp(x)=x$,
whence the claim.}. In the intransitive case, $M$ is (locally) foliated by dimension $(n-1)$ leaves,
to which the Killing fields are tangent. On these leaves (restriction to them uniquely
determines the Killing field \cite[Lemma 2.1]{KN}) the metric is not of constant sectional curvature
(otherwise we have the case (c)), and thus
$\dim H(g)\le\dim I(g)+1\le{n-2\choose 2}+2$.
So in this case $\dim\mathfrak{p}(g)$ is strictly less than the bound of Theorem \ref{Thm1}.

 \begin{rk}
An alternative approach to the local version of \cite{KN} is based on the recent
results \cite{DT}, where, by an argument independent from \cite{KN}, it is demonstrated
that a Riemannian conformally non-flat metric $g$ has $\dim C(g)\le \binom{n-1}2+3$
if $n\ne4,6$ (in \cite{KT} this was deduced from the local version of \cite{KN}).
Our arguments apply since $\dim H(g)\le \binom{n}2+1$ for conformally flat $g$ of
non-constant curvature.
 \end{rk}

It follows from the above estimates and the analysis of the obtained models
that the equality for $n>3$ is attained only in the following sub-case of (d$_1$):
 $$
M=S^2_c\times\R^{n-2},\quad c\ne0
 $$
(in the case of the second factor $S^{n-2}_{\bar c}$ having curvature $\bar c\ne0$ all
projective transformations are isometries). The projective transformations of this $M$
consist of 3-dimensional space of isometries of the first factor
(that is $so(3)$ or $sl(2)$ for $c>0$ or $c<0$ respectively)
plus arbitrary affine transformations $x\mapsto A\cdot x+b$ of the second.

\section{Lorentzian signature, dimension $n>3$}\label{S3}

In this section we consider Lorentzian manifolds of dimension $n>3$.
Complete classification of such metrics with the largest dimensions of $I(g)$
is not known to us, so we approach the problem differently.

It is still true that for $g$ of non-constant sectional curvature
$\dim I(g)$ does not exceed ${n\choose2}+1$ in the Lorentzian signature
(in fact, the inequality fails only when $n=4$ for the Riemannian and
split signature \cite{E$_3$}). If the metric $g$ is conformally flat,
but not of constant sectional curvature, then by (\ref{est1}) we get
 $$
\dim\mathfrak{p}(g)\le {n\choose2}+1+2<n^2-3n+6.
 $$

On the other hand, if $(M,g)$ is not conformally flat,
then by \cite{DT} $\dim H(g)\le\dim C(g)\le{n-1\choose2}+4$ and so,
using (\ref{est2}) and (\ref{MiK}), we get
 $$
\dim \mathfrak{p}(g)\le{n-1\choose2}+4+{n-1\choose2}=n^2-3n+6.
 $$
This estimate is achieved on the Lorentzian pp-wave metric, trivially extended to dimension $n$,
$M^n=M^3_{pp}(x,y,z)\times\R^{n-3}(u_4,\dots,u_n)$, see \cite{KT}:
 $$
g=2dx\,dy+z^2dy^2+dz^2+\sum_{i=4}^n du_i^2.
 $$
This metric has $\frac12(n^2-3n+8)$ Killing vector fields
 $$
\p_x,\ \p_y,\ e^y(\p_z-z\,\p_x),\ e^{-y}(\p_z+z\,\p_x),\
\p_{u_i},\ u_i\p_x-y\,\p_{u_i},\ u_i\p_{u_j}-u_j\p_{u_i},\
 $$
1 pure homothety
 $$
2x\,\p_x+z\,\p_z+\sum_{i=4}^nu_i\p_{u_i}
 $$
and $\frac12(n^2-3n+2)$ pure affine fields
 $$
y\,\p_x,\ u_i\p_x,\ u_i\p_{u_j}+u_j\p_{u_i}
 $$
(the latter is easy to check as the connection is rather simple:
$\Gamma_{23}^1=\Gamma_{32}^1=2z$, $\Gamma_{22}^3=-z$ and the other Christoffel symbols are zero).
There are no non-affine projective fields.
Thus the totality of the linearly independent
projective symmetries is $\dim\mathfrak{p}(g)=n^2-3n+6$. This proves the claim for $n>3$.

Let us note that the submaximal model described above is (visibly) not unique.
We can take any metric from the list of Kru\v{c}kovi\v{c} \cite{Kr} (see Section \ref{S4.5})
that has 4 Killing fields and 1 essential homothety/affine field, and extend it
trivially to $n$ dimensions, achieving the same result: The new metric $g$ will
have $(n-2)$-dimensional space of parallel vector fields, yielding $(n-1)(n-2)$ affine symmetries,
to which we add the 4 fields coming from the 3D metric (3 Killing fields + 1 homothety, as one Killing
field that is parallel was already counted). All maximal models are obtained in this way (this
observation is based on the fact \cite[Theorem 5]{FM} that parallel (0,2) tensors are
linear combinations of symmetric products of parallel vectors).

Finally we remark that $\mathfrak{p}(g)=\mathfrak{a}(g)$ for all Lorenzian metrics $g$ of submaximal dimension of the projective algebra. Indeed, if there was an essentially projective symmetry,
then by \cite[Theorem 3]{FM}
 \begin{equation}\label{FeM}
\dim\mathfrak{p}(g)\le\dim I(g)+D(g)-1.
 \end{equation}
According to \cite{DT} for a conformally non-flat metric $g$ $\dim C(g)\le{n-1\choose2}+4$.
It also follows from the construction of loc.cit.\ that at any point $x\in M$ the grading 0 component
$\mathfrak{s}_0(x)$ of the (associated graded to
the naturally filtered) symmetry group $\mathfrak{s}$ satisfies:
$\mathfrak{so}(1,n-1)\not\supset\mathfrak{s}_0\subset\mathfrak{co}(1,n-1)$.
Thus $I(g)\neq C(g)$\footnote{Consider $v\in C(g)$ that satisfies $v_x=0$, $d_xv\in\mathfrak{s}_0(x)\setminus\mathfrak{so}(1,n-1)$.
Then $L_vg(x)=\l\,g_x$ for $\l\neq1$, and so $v$ is not an infinitesimal isometry.}
and so $\dim I(g)\le{n-1\choose2}+3$. Combining this estimate, (\ref{FeM}) and (\ref{MiK})
we obtain $\dim\mathfrak{p}(g)\le n^2-3n+5$,
which contradicts submaximality of the projective algebra.

\section{The proof for the general signature}\label{S4}

Consider now the metric of signature $(p,q)$, where both $p,q\ge2$, so $n=p+q\ge4$.
If the metric $g$ is conformally flat and not of constant sectional curvature, then
by (\ref{est1}) we get
 $$
\dim\mathfrak{p}(g)\le {n\choose2}+2+2<n^2-3n+8.
 $$

Next, by the results of \cite{KT}, for non-conformally flat metric structure we have
 $$
\dim C(g)\le{n-1\choose2}+6.
 $$
and so, using $H(g)\subseteq C(g)$, by (\ref{est1}) and (\ref{MiK}) we get
 $$
\dim\mathfrak{p}(g)\le {n-1\choose2}+6+{n-1\choose2}=n^2-3n+8.
 $$

The equality is attained on the metric of the split signature pp-waves
trivially extended from 4 to $n$ dimensions:
 $$
g_{pp}=dx\,dw+dy\,dz+y^2\,dw^2+\sum_{i=5}^n\epsilon_i\,du_i^2.
 $$
This metric has conformal Weyl curvature tensor $CW=(dy\we dw)^2$,
and is Einstein (Ricci-flat). Moreover the projective symmetries coincide with
its affine symmetries. We have $C(g_{pp})=H(g_{pp})$, and the generators of this algebra
were calculated in \cite{KT}:
 \begin{gather*}
\p_x,\ \p_z,\ \p_w,\ \p_y-2\,yw\,\p_x+w^2\p_z,\ y\,\p_x-w\,\p_z,\\
(z+yw^2)\,\p_x-w\,\p_y-\tfrac13w^3\p_z,\ x\,\p_z-y\,\p_w+\tfrac23y^3\p_x,\\
x\,\p_x+y\,\p_y-z\,\p_z-w\,\p_w,\ 2\,x\,\p_x+y\,\p_y+z\,\p_z,\\
\p_{u_i},\ \epsilon_i u_i\,\p_{u_j}-\epsilon_j u_j\,\p_{u_i},\
2\epsilon_i u_i\,\p_z-y\,\p_{u_i},\ 2\epsilon_i u_i\,\p_x-w\,\p_{u_i}.
 \end{gather*}
In addition, $g_{pp}$ has the following genuine affine symmetries (not homotheties)
 \begin{gather*}
y\,\p_z,\ w\,\p_x,\ y\,\p_x+w\,\p_z,\ 2\epsilon_i u_i\,\p_z+y\,\p_{u_i},\\
2\epsilon_i u_i\,\p_x+w\,\p_{u_i},\ \epsilon_i u_i\,\p_{u_j}+\epsilon_j u_j\,\p_{u_i}.
 \end{gather*}
Thus the total number of linearly independent projective symmetries is
$\op{dim}\mathfrak{p}(g)=n^2-3n+8$ as required in Theorem \ref{Thm1}.

\section{Dimension $n=3$}\label{S4.5}

Consider the special case $n=3$, where $g$ necessarily has Riemannian or Lorentzian signature.

In this dimension the Weyl conformal curvature vanishes identically,
and so (even for non-conformally flat metrics) $D(g)\le 2$, see \cite{KM}. Since the
submaximal $\dim I(g)\le4$ we get by (\ref{est1}): $\dim\mathfrak{p}(g)\le 6$.

On the other hand, in the Riemannian case\footnote{Recall from Section \ref{S2} that $H(g)=I(g)$
if $I(g)$ is transitive. It is easy to check that $\dim I(g)\le3$ if $I(g)$ is intransitive.}
$H(g)\le4$, so (\ref{est1}) implies
$\dim\mathfrak{p}(g)\le 5$. Also if $g$ is not conformally flat, then $\dim C(g)\le4$
by \cite{KT}. Henceforth we get the bound $\dim\mathfrak{p}(g)\le 5$ in this case too.

The local metrics in 3 dimensions with $\dim I(g)=4$ were classified by
G.Kru\v{c}kovi\v{c} \cite{Kr}. There are 8 different cases. The first three are
the Lorentzian metrics:
 \begin{enumerate}
\item $g_1=k\,dx^2+2(2-c)e^{cx}\,dx\,dy+e^{2x}dz^2$,\quad ($c\ne2$)
\item $g_2=k\,dx^2+e^{2x}(2\,dx\,dy-dz^2)$,
\item $g_3=k\,dx^2+e^{x\,\sqrt{4-\omega^2}}\bigl(2\,dx\,dy-
    \frac4{\omega^2}\cos^2(\frac{\omega x}2)\,dz^2\bigr)$.
 \end{enumerate}
For all these metrics $\dim I(g)=4$, $\dim H(g)=5$, $D(g)=2$, so both estimates
(\ref{est1}) and (\ref{est2}) state $\dim\mathfrak{p}(g)\le 6$.

And in fact, this bound is achieved for all 3 cases.
The infinitesimal automorphisms can be shown explicitly.
For instance, for the first metric $g_1$ ($c\ne2$)
the algebra of projective symmetries has generators
 $$
\p_y,\ \p_z,\ z\,\p_y+{\rm e}^{x(c-2)}\p_z,\ \p_x-cy\,\p_y-z\,\p_z,\
\left(2\,y+{\frac{k\,{\rm e}^{-cx}}{(c-2)c}}\right)\p_y+z\,\p_z,\ {\rm e}^{cx}\p_y.
 $$
The first four are Killing fields, the fifth is a homothety and the last is
a projective field for the metric $g$ (in fact it is an affine field for $g$).

For $c=0$ the homothety has to be changed to $(2y+\frac{k}2x)\,\p_y+z\,\p_z$,
and the genuine projective field has to be changed to $2y\,\p_y+z\,\p_z$.

These formulae were obtained using the \emph{DifferentialGeometry} package of \textsc{Maple},
and can be easily verified by hand.
The formulae for $g_2$ and $g_3$ are obtained similarly.

The other metrics (4)-(8) in \cite{Kr} can be of both possible signatures.

The metrics of cases (4), (5) and (6) in the list of Kru\v{c}kovi\v{c} are not
conformally flat, and for them direct calculation yields $H(g)=I(g)$, $D(g)=1$.
Thus by (\ref{est2}) we get $4\le \dim\mathfrak{p}(g)\le 4$, i.e. all projective
transformations in these cases are isometries,
and the metrics are not of submaximal projective symmetry.

The metrics in cases (7) and (8) of loc.cit. are conformally flat, but for them
$H(g)=I(g)$, $D(g)=2$, whence $\dim\mathfrak{p}(g)\le 5$. In fact, in these cases the manifold
is locally $M^3=\R^1\times S^2_c$, where $c\ne0$. There is one genuine affine symmetry
(scaling along $\R^1$), so we have $\dim\mathfrak{p}(g)=5$.
Again for the Lorentzian signature these $M^3$ are not submaximal, but for Riemannian
signature they are.
This finishes investigation of the 3-dimensional case.

\section{Dimension $n=2$}\label{S5}

Dimension 2 is another exception to the above arguments.
Again here the problem is classical: the 2D projective structures
were studied by S. Lie \cite{L}, R. Liouville \cite{Li} and A. Tresse \cite{T}.
In the latter reference it was proven that the submaximal dimension of the symmetry
algebra is 3 and the submaximal projective structures were classified.

The two projective structures arising in this way (see also \cite{Ma}),
when written as the 2nd order ODE on the plane, are
 \begin{equation}\label{LLT}
x\,y''=\epsilon(y')^3-\tfrac12\,y'\quad (\epsilon=\pm1).
 \end{equation}
It is easy to check they are metrizable; the corresponding metrics
 are
 \begin{equation}\label{2D}
g=x\,dx^2-2\,\epsilon x\,dy^2.
 \end{equation}
The projective symmetry algebra is $sl(2)$ realized on the plane $\R^2(x,y)$
via the vector fields $\p_y,\, x\,\p_x+y\,\p_y,\, 2xy\,\p_x+y^2\p_y$.

Another form of (\ref{LLT}) was considered in \cite{BMM}:
$y''=\epsilon\,e^{-2x}(y')^3+\frac12y'$.
It is obtained from (\ref{LLT}) by the transformation $x\mapsto-\epsilon e^x$.

Yet in \cite{K$_1$} this projective connection was written
differently:
 $$
y''=\epsilon(xy'-y)^3.
 $$
The symmetry algebra $sl(2)$ in this realization has
the standard linear representation on $\R^2(x,y)$: $x\,\p_x-y\,\p_y,\ x\,\p_y,\ y\,\p_x$.
A metric corresponding to this projective connection is
 $$
g=\Bigl(\frac{dx}{y^2}-\frac{x\,dy}{y^3}\Bigr)^2-\epsilon\,\frac{dy^2}{y^8}.
 $$
The complete list of the corresponding metrics is contained in \cite{BMM}.

This finishes the proof of Theorem \ref{Thm1}.

\section{Affine symmetries of a metric}\label{S6}

In this Section we prove Theorem \ref{Thm2}, estimating $\mathfrak{a}(g)\subseteq\mathfrak{p}(g)$.

Recall that (for any $n\ge2$) if the Levi-Civita connection $\Gamma$ of the metric $g$ is projectively
flat, then $g$ has constant curvature. This is a variant of Beltrami's theorem, see \cite{EM}.
Thus if $g$ is not of constant sectional curvature, we get the bound
 \begin{equation}\label{bound1}
\dim\mathfrak{a}(g)\le\dim\mathfrak{p}(g)\le n^2-3n+\sigma,
 \end{equation}
where $\sigma=5$, $6$ or $8$ is the same number, depending on the signature of $g$ and dimension $n$ of $M$,
as in Theorem \ref{Thm1}.

We have to show that this bound is achieved within the class of metrics of
nonconstant sectional curvature. But indeed, the submaximal
projective symmetry algebras of the models, studied in the preceding sections
consisted of affine fields only, provided $n>2$. Thus we conclude that the bound is actually sharp
in the considered class.

If $g$ has nonzero constant sectional curvature (in this case
$\dim\mathfrak{a}(g)$ is also less than the maximal value $n^2+n$),
then $\mathfrak{a}(g)=I(g)$ and we get
 \begin{equation}\label{bound2}
\dim\mathfrak{a}(g)={n+1\choose2}.
 \end{equation}
For $n\ge5$ and for $n=2$ this does not exceed the first bound (and is strictly less than it for $n>5$)
in the Riemannian case, and it never exceeds it (for $n>4$, is strictly less than it)
for the Lorentzian signature. For the Riemannian signature and $n=3,4$
the bound in (\ref{bound1}) exceeds the bound in (\ref{bound2}) by 1. Thus
in this case the constant sectional curvature spaces give the submaximal dimension of $\mathfrak{p}(g)$.

In the general signature $(p,q)$, $p,q\ge2$, $n=p+q\ge4$, the bound of (\ref{bound2}) is strictly less
than that of (\ref{bound1}), thus nothing new is added here.

Dimension 2 is again special. If the affine connection is not flat, then $\mathfrak{a}(g)=H(g)$.
Indeed, since the curvature is nonzero, the holonomy algebra is full.
The maximal dimension of $H(g)$ in the case of non-constant scalar curvature is 2
(this is achieved, for example, in the case of metric (\ref{2D}) via its first two symmetries).
Consequently the submaximal dimension of $\mathfrak{a}(g)$ is 3 and is achieved for the
round sphere, the Lobachevsky plane or de Sitter metric. Herewith the claim is proved.

\appendix
\section{Local version of Kobayashi-Nagano 1972}\label{A}

In \cite{KN} the Riemannian manifolds $(M^n,g)$ with the abundant groups $G$ of
isometries (namely those with $\dim G\ge\binom{n-1}2+2$) were classified.
We explain here that their results hold true locally.

 \begin{theorem}\label{thm3}
If the algebra $I(g)$ of symmetries of a Riemannian metric $g$ has dimension
$\dim I(g)\ge\binom{n-1}2+3$, then $(M,g)$ is locally isomorphic to one
of the models from the list (a)-(d) indicated in Section \ref{S2}.
 \end{theorem}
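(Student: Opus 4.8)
The plan is to establish Theorem~\ref{thm3} by running the Kobayashi--Nagano classification at the infinitesimal level, checking that each ingredient is purely local; we may assume throughout that $g$ is not of constant sectional curvature, as otherwise there is nothing to prove. Fix a point $x\in M$, put $\mathfrak g=I(g)$ and let $\mathfrak h=\mathfrak g_x$ be the isotropy subalgebra, so that $k:=\dim\mathfrak g-\dim\mathfrak h\le n$ is the dimension of the orbit $\mathfrak g\cdot x$. Since a Killing field whose $1$-jet at $x$ vanishes is identically zero on the connected manifold $M$, the linear isotropy representation $\mathfrak h\hookrightarrow\mathfrak{so}(T_xM)\cong\mathfrak{so}(n)$ is injective; and since the connected isotropy preserves the orbit through $x$ and hence the orthogonal splitting $T_xM=T_x(\mathfrak g\cdot x)\oplus N_x$, we have $\mathfrak h\subseteq\mathfrak{so}(k)\oplus\mathfrak{so}(n-k)$, so $\dim\mathfrak h\le\binom k2+\binom{n-k}2=\binom n2-k(n-k)$. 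Combining this with $\dim\mathfrak h=\dim\mathfrak g-k\ge\binom{n-1}2+3-k=\binom n2-(n-4)-k$ yields $k(n-k-1)\le n-4$; since for $1\le k\le n-2$ the left side is at least $n-2>n-4$ (the concave function $k\mapsto k(n-k-1)$ attains its minimum on that interval at the endpoints, with value $n-2$), we conclude $k\in\{0,n-1,n\}$, i.e. $\mathfrak g$ acts with cohomogeneity at most one. When $k=0$ the same estimate at a generic nearby point forces the generic orbit to have dimension $n-1$ or $n$, so it suffices to treat the transitive case $k=n$ and the cohomogeneity-one case $k=n-1$.

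For the transitive case I would use that a germ of Riemannian metric admitting a transitive local Killing algebra is determined, up to local isometry, by its infinitesimal data $(\mathfrak g,\mathfrak h,\langle\cdot,\cdot\rangle)$ --- equivalently by the $\mathfrak h$-invariant value at $x$ of the curvature tensor and its iterated covariant derivatives (the Singer--Nomizu theory of locally homogeneous spaces). It then suffices to classify the subalgebras $\mathfrak h\subseteq\mathfrak{so}(n)$ of codimension $\le\binom n2-\binom{n-1}2-3+n=2n-4$ and to read off the geometry in each case. By the classical description of subalgebras of $\mathfrak{so}(n)$ of small codimension, up to conjugacy the only possibilities are $\mathfrak{so}(n)$; $\mathfrak{so}(n-1)$; $\mathfrak{so}(2)\oplus\mathfrak{so}(n-2)$; $\mathfrak u(m)$ when $n=2m$ and $m\le 4$ (so $n\in\{4,6,8\}$); and, for $n\le 8$, one of the exceptional subalgebras (those of $SU(3)\subset SO(6)$, $G_2\subset SO(7)$, $\mathrm{Spin}(7)\subset SO(8)$), to be dealt with by direct inspection of the finitely many isotropy-irreducible homogeneous spaces they produce. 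If $\mathfrak h=\mathfrak{so}(n)$ the one-dimensional space of invariant algebraic curvature tensors forces constant curvature (excluded). If $\mathfrak h\supseteq\mathfrak u(m)$, the isotropy preserves a complex structure on $T_xM$, hence $\mathfrak g$ preserves an almost complex structure $J$; the $\mathfrak h$-invariant part of $\nabla J$ vanishes, so $J$ is parallel and $g$ is K\"ahler, and with isotropy exactly $\mathfrak u(m)$ the holomorphic sectional curvature is constant --- the complex space forms of cases (a), (c$_2$), (d$_2$) according as $n=4,6,8$. If $\mathfrak h=\mathfrak{so}(2)\oplus\mathfrak{so}(n-2)$, the isotropy splits $T_xM$ orthogonally as $\R^2\oplus\R^{n-2}$; the associated $\mathfrak g$-invariant distributions have vanishing $\mathfrak h$-invariant second fundamental form, hence are parallel, so by the local de~Rham theorem $g$ is a product $g_1\times g_2$ with $\dim g_1=2$, each factor inheriting a large isometry algebra and hence of constant curvature --- case (d$_1$). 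Finally $\mathfrak h=\mathfrak{so}(n-1)$ with non-constant curvature cannot act transitively: the fixed line of $\mathfrak{so}(n-1)$ yields a parallel line field, giving a splitting $\R^1\times S^{n-1}_c$, which is case (b). In each surviving case the identity $\dim\mathfrak g=n+\dim\mathfrak h$ selects exactly the corresponding entry of the list.

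For the cohomogeneity-one case $k=n-1$, near $x$ the $\mathfrak g$-orbits foliate by hypersurfaces $\Sigma_t$, and taking $t$ to be arclength along the unit normal geodesics the metric becomes $g=dt^2+g_t$ with $\mathfrak g$ acting on each leaf $(\Sigma_t,g_t)$ by isometries. A Killing field of $g$ is determined by its restriction to one leaf, so $\mathfrak g$ embeds into $I(g_t)$ for every $t$; but $\dim\mathfrak g\ge\binom{n-1}2+3$ exceeds the submaximal dimension of the isometry algebra of an $(n-1)$-dimensional Riemannian metric of non-constant curvature, so each $g_t$ has constant sectional curvature. Hence $g=dt^2+a(t)^2\,ds_0^2$ with $ds_0^2$ of constant curvature --- this is case (c$_1$), degenerating for special $a$ to constant curvature (excluded) or to the product $\R^1\times S^{n-1}_c$, i.e. case (b).

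The main obstacle is the transitive case, specifically the passage, for each admissible large isotropy $\mathfrak h$, from invariance of the curvature data to a concrete model. For the irreducible symmetric models this is immediate, since $\nabla R=0$ and $\mathfrak h$-invariance pins down $R_x$ up to scale; for the K\"ahler-non-symmetric and the reducible cases one must invoke the local de~Rham decomposition together with the classification of K\"ahler metrics of constant holomorphic sectional curvature, and verify by a dimension count of invariant tensors that no exotic locally homogeneous germ carries such a large isotropy. The remaining work --- listing the small-codimension subalgebras of $\mathfrak{so}(n)$ and, in particular, dispatching the exceptional ones for $n\le 8$, together with the arithmetic matching of $\dim\mathfrak g$ to the list --- is a finite verification.
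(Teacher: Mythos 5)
Your route is genuinely different from the paper's. The paper does \emph{not} re-derive the classification: it takes the global Kobayashi--Nagano theorem as given and shows that its proof localizes. The only new ingredient is a local Myers--Steenrod/Palais statement (the Proposition in the Appendix): the germ-stabilizer $G_x$, defined as the automorphisms of the Cartan form on the orthogonal frame bundle preserving the fibre $F_x$, is a compact Lie subgroup of $SO(T_xM)$ whose Lie algebra is exactly the isotropy algebra $I_x$ of Killing germs vanishing at $x$. Since the arguments of Kobayashi--Nagano use only the stabilizer and the orbits, they then go through verbatim with orbits read as germs of submanifolds. Your plan instead re-runs the whole classification infinitesimally (orbit-dimension estimate, cohomogeneity reduction, Singer--Nomizu rigidity of locally homogeneous germs, classification of large subalgebras of $\mathfrak{so}(n)$). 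What your approach buys is independence from the global theorem; what it costs is that you must actually reprove Kobayashi--Nagano's case analysis, which is the bulk of their paper. Your orbit-dimension computation ($k(n-k-1)\le n-4$, hence $k\in\{0,n-1,n\}$) and your cohomogeneity-one case are correct and essentially self-contained.

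The gap is in the transitive case, which you defer to ``a finite verification'' but which is precisely the hard part. Two concrete issues. First, your list of subalgebras $\mathfrak h\subseteq\mathfrak{so}(n)$ of codimension at most $2n-4$ is incomplete in low dimensions, where $2n-4$ is a large fraction of $\binom n2$: for $n=4$ the bound only requires $\dim\mathfrak h\ge2$, so besides your entries one must also dispose of the maximal torus realized in other ways and, more seriously, of a chiral $\mathfrak{su}(2)\subset\mathfrak{so}(4)$ (codimension $3$), which acts irreducibly on $\R^4$ and admits a $6$-dimensional space of invariant algebraic curvature tensors --- ruling it out needs either Wolf's classification of isotropy-irreducible spaces or a Jacobi-identity computation in $\mathfrak g=\mathfrak h\oplus\mathfrak m$, neither of which you supply. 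Second, the passage from ``$\mathfrak h$-invariant curvature'' to a concrete model is not automatic from invariance of $R_x$ alone: e.g.\ for $\mathfrak h=\mathfrak u(m)$ the space of invariant algebraic curvature tensors is $2$-dimensional (spanned by the constant-curvature and the complex-space-form tensors), so you must first establish the K\"ahler condition and only then conclude constant holomorphic sectional curvature; similarly the reducible and $\mathfrak{so}(n-1)$ cases need the full Singer data $(R,\nabla R,\dots)$, not just $R_x$. None of these steps is likely to fail, but as written they are assertions, and filling them in amounts to reproducing the content of \cite{KN} --- which is exactly what the paper's localization argument is designed to avoid.
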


The main difference of the local and global approaches is the usage of group methods,
and these in \cite{KN} enter via the stabilizer $G_x$ of the point $x\in M$
(see Lemmata 2.4 and 2.8 in loc.cit.). To adapt the notations to the local setting and
establish their properties consider the orthogonal frame bundle $\pi:\mathcal{G}\to M$ (= principal
$SO(n)$-bundle) with the Cartan form $\omega\in\Omega^1(\mathcal{G},E(n))$, $E(n)=SO(n)\ltimes\R^n$,
giving the absolute parallelism. We will write $F_x=\pi^{-1}(x)\simeq SO(T_xM)$.

Let $\mathcal{O}^M_x$ denote the germ of neighborhoods of $x$ in $M$ and
$\mathcal{O}^{\mathcal{G}}_{F_x}$ the germ of neighborhoods of $F_x$ in $\mathcal{G}$. Denote
 $$
I_x=\{v\in\mathfrak{D}(\mathcal{O}_x)\,|\,L_v(g)=0,v(x)=0\}
 $$
the $x$-stalk of the sheaf of the symmetries of $g$ and
 $$
G_x=\{\phi:\mathcal{O}^{\mathcal{G}}_{F_x}\to\mathcal{O}^{\mathcal{G}}_{F_x}\,|\,\phi^*\oo=\oo,
\phi(F_x)=F_x\}
 $$
the $x$-stalk of the sheaf of the local isometries of $g$.
We can suppose that all the transformations $\phi$ are defined on the same neighborhood $U_\epsilon=\pi^{-1}(B_\epsilon(x))$, where $B_\epsilon(x)$ is the ball of small radius $\epsilon$
around $x\in M$ with respect to the metric $g$;
then $\phi:U_\epsilon\to U_\epsilon$.
Notice that $\phi_0:\mathcal{O}_x\to \mathcal{O}_x$ given by $\pi\circ\phi=\phi_0\circ\pi$
satisfies $\phi_0^*g=g$.

Since an isometry is uniquely determined by its 1-jet, the topology on $G_x$ is induced
from embedding into the space $J^1_x(M,M)$. Similarly, $I_x$ has the natural topology
of a finite-dimensional vector space.

The following is a local version of the Myers-Steenrod theorem.

 \begin{prop}
The connected component $G_x^0$ of unity in $G_x$ is isomorphic to the closed subgroup in $SO(T_xM)$
generated by $\exp(v)$, $v\in I_x(g)$.
 \end{prop}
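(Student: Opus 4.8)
The plan is to mirror the classical Myers--Steenrod argument in the local setting, using the absolute parallelism $(\mathcal G,\omega)$ as the central tool; the finite-dimensionality and completeness machinery has already been prepared by restricting all local isometries to the common neighborhood $U_\epsilon=\pi^{-1}(B_\epsilon(x))$. First I would set up the two natural maps: the ``derivative at $F_x$'' map $G_x^0\to SO(T_xM)$, $\phi\mapsto d_x\phi_0$, which is injective because an isometry is determined by its $1$-jet, and the exponential map $I_x(g)\to G_x^0$, $v\mapsto\exp(v)$, whose existence as a local flow requires the observation that an infinitesimal isometry vanishing at $x$ has a flow fixing $F_x$ and preserving $\omega$, hence lands in $G_x$ for small time. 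The composition $I_x(g)\to G_x^0\to SO(T_xM)$ sends $v$ to $d_xv\in\mathfrak{so}(T_xM)$, which is the infinitesimal version of the same $1$-jet determinacy statement. So the image of $\exp\colon I_x(g)\to SO(T_xM)$ is exactly $\exp$ of the Lie subalgebra $\mathfrak h:=d_x(I_x(g))\subseteq\mathfrak{so}(T_xM)$, and the assertion amounts to: $G_x^0$ is (isomorphic to) the connected Lie subgroup $H\subseteq SO(T_xM)$ with Lie algebra $\mathfrak h$, equivalently the closed subgroup generated by $\exp(\mathfrak h)$.

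The key step is to promote elements of the abstract group generated by $\exp(v)$, $v\in I_x(g)$, to honest local isometries and, conversely, to show every element of $G_x^0$ arises this way. For the first direction: given $v_1,\dots,v_k\in I_x(g)$, the composition $\exp(t_1 v_1)\circ\cdots\circ\exp(t_k v_k)$ is a well-defined $\omega$-preserving transformation of $\mathcal O^{\mathcal G}_{F_x}$ for $(t_1,\dots,t_k)$ small, hence an element of $G_x$; this shows the subgroup generated by the $\exp(v)$'s is a subgroup of $G_x^0$, and its image in $SO(T_xM)$ is the subgroup generated by $\exp(\mathfrak h)$. For the converse, the point is that $G_x^0$ is a \emph{connected} topological group whose topology embeds in $J^1_x(M,M)$, so it suffices to realize a neighborhood of the identity in $G_x^0$ by such products. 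Here I would argue that the evaluation-of-$1$-jet map $G_x^0\to SO(T_xM)$ is not only injective but open onto its image, using that any $\xi\in\mathfrak h$ integrates to $\exp(v)$ for the corresponding $v\in I_x(g)$, and that these cover a neighborhood of $\mathrm{id}$ in $H$; transitivity of $H$ on itself by left translation then propagates this to all of $H$, and the injectivity pins down $G_x^0\cong H$. Finally, $H$ is closed in $SO(T_xM)$ simply because $SO(T_xM)$ is compact, so a connected Lie subgroup with Lie algebra $\mathfrak h$ has closure a compact Lie group whose Lie algebra is again $\mathfrak h$ (connectedness forces equality with $H$ only if $H$ is already closed, so more precisely I would state the isomorphism with ``the closed subgroup generated by $\exp(\mathfrak h)$'' as in the statement, which is the closure of $H$, and check $G_x^0$ is likewise closed using that it is cut out by the closed conditions $\phi^*\omega=\omega$, $\phi(F_x)=F_x$ inside the relevant jet space).

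I expect the main obstacle to be the \emph{surjectivity/openness} part: showing that \emph{every} nearby local isometry fixing $F_x$ is a product of exponentials of infinitesimal isometries vanishing at $x$, i.e. that $G_x^0$ has no ``extra'' elements beyond the connected group integrating $\mathfrak h$. In the global Myers--Steenrod setting this is handled by a completeness/properness argument for the isometry group acting on the frame bundle; locally one must instead invoke that the pseudogroup of local isometries of a manifold with absolute parallelism is a \emph{Lie} pseudogroup of the expected dimension — concretely, that the prolongation of the isometry equations $\phi^*\omega=\omega$ is formally integrable with solution sheaf a local Lie group — so that near the identity $G_x^0$ is a finite-dimensional Lie group and its Lie algebra is precisely the space of $\omega$-preserving vector fields tangent to $F_x$, namely $I_x(g)$. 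Once that structural fact is in place (it is the local analogue of the rigidity already used in the excerpt, e.g. the $1$-jet determinacy cited from \cite[Lemma 2.1]{KN}), the exponential map of this Lie group is $v\mapsto\exp(v)$ and the Proposition follows by the standard fact that a connected Lie group is generated by any neighborhood of the identity, hence by $\exp$ of its Lie algebra. I would present the formal-integrability input as a lemma (or cite it), keep the frame-bundle bookkeeping light, and devote the bulk of the written proof to the clean chain $I_x(g)\xrightarrow{\exp}G_x^0\xrightarrow{d_x}SO(T_xM)$ and the identification of its image.
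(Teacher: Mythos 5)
Your skeleton matches the paper's: embed $G_x$ into $SO(T_xM)$ via its action on a frame (equivalently its $1$-jet at $x$), map $I_x(g)$ in by exponentiation, and show the two images coincide. Where you diverge is in the one step you correctly single out as the crux --- that $G_x^0$ contains nothing beyond the integrated infinitesimal isometries. You propose to import this from the theory of Lie pseudogroups (formal integrability of the equation $\phi^*\omega=\omega$), leaving it as a lemma to be cited. The paper instead extracts it directly from the absolute parallelism: following Kobayashi's Theorem 3.2, $G_x$ is characterized as the set of germs commuting with the flows $\exp(\xi)$ of the $\omega$-parallel fields, and the evaluation map $\phi\mapsto\phi(u)$, $u\in F_x$, embeds $G_x$ as a closed submanifold of the compact fiber $F_x\simeq SO(T_xM)$, so $G_x$ is a compact Lie group with no further analytic input. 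The matching of Lie algebras is then a two-line argument: any one-parameter subgroup $\exp(tw)\subset G_x$ satisfies $\exp(tw)^*g=g$, hence $L_wg=0$, hence $w\in I_x$, so $\dim G_x\le\dim I_x$ and (with Palais' theorem giving the reverse containment) equality holds. This route is lighter than the pseudogroup machinery and also disposes of your closedness hedging for free: since $G_x^0$ is compact, its image is automatically a closed subgroup of $SO(T_xM)$, so there is no daylight between your $H$ and its closure. Your plan would be complete once the Lie-pseudogroup lemma is actually supplied or precisely referenced; as written, the central load-bearing step is outsourced, whereas the paper carries that load with the centralizer characterization plus the one-parameter-subgroup argument.
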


 \begin{proof}
Following the proof of Theorem 3.2 \cite{Ko} we establish that $G_x$ is the compact Lie
group defined by the property that $\phi\in G_x$ iff $\phi\cdot\exp(\xi)=\exp(\xi)\cdot\phi$ for
any parallel vector field w.r.t.\ the absolute parallelism, $\omega(\xi)=\op{const}$.
The map $\phi\mapsto\phi(u)$, $u\in F_x$ is an embedding of $G_x$ into $F_x$ and its image
is a closed (hence compact) submanifold.
Thus we can identify $G_x$ and also $G_x^0$ as Lie subgroups in $SO(T_xM)$.

Denote the embedding by $\iota$.
By Palais theorem \cite[Theorem 3.1]{Ko} $I_x$ coincides with the Lie algebra of $\iota(G_x)$.

Indeed, the vector fields $v\in I_x$ lift naturally to vector fields in $\mathcal{G}$ tangent to $F_x$. Denote by $\hat v$ the lifted field $v$ restricted to $F_x$.
The homomorphism $I_x\to\op{Lie}(\iota(G_x))\subset\mathfrak{D}(F_x)$, $v\mapsto\hat v$, is injective.
We claim that it is surjective. Indeed, if $\dim I_x<\dim G_x$, then there exists a 1-parameter
group $\exp(tw)\in G_x$, $w\in\op{Lie}(G_x)\setminus I_x$, with the property $\exp(tw)^*g=g$
(here we identify the vector field $w$ on $\mathcal{O}_x$ and its lift to $\mathcal{G}_{F_x}$),
whence $L_wg=0$ and so $w\in I_x$ -- a contradiction.

Thus $\dim I_x=\dim G_x$ and so the group generated by $\exp(\hat v)$, $v\in I_x$,
coincides with the (connected) Lie subgroup $\iota(G_x^0)\subset SO(T_xM)$.
 \end{proof}

 \begin{rk}
A similar statement holds for any Cartan geometry of type $(G,H)$ with compact $H$.
 \end{rk}

Now rest of the exposition in \cite{KN} can be easily adapted to the local setting:
Every time the stabilizer $G_x$ occurs, it should be understood in the above sense.
The group $\iota(G_x^0)$ act by isometries on the unit sphere $S^{n-1}\subset T_xM$ and the
basic reduction argument of loc.cit.\ goes through.

The group orbits in $M$ locally have to be understood as germs of submanifolds,
and consideration of various discrete subgroups can be ignored.
No other global arguments is contained in \cite{KN}.
This finishes the proof of Theorem \ref{thm3}.

\bigskip

\textsc{Acknowledgment.} We thank Boris Doubrov and Dennis The for
helpful discussions on the issues of the paper \cite{KN}, and for providing us the
early version of \cite{DT}.


\end{document}